\renewcommand*\subjclass[2][2010]{%
  \def\@subjclass{#2}%
  \@ifundefined{subjclassname@#1}{%
    \ClassWarning{\@classname}{Unknown edition (#1) of Mathematics
      Subject Classification; using '2010'.}%
  }{%
    \@xp\let\@xp\subjclassname\csname subjclassname@#1\endcsname
  }%
}
\newtheorem{theorem}{Theorem}
\newtheorem{lemma}{Lemma}
\newtheorem{proposition}{Proposition}
\theoremstyle{definition}
\renewcommand*\subjclass[2][2010]{%
  \def\@subjclass{#2}%
  \@ifundefined{subjclassname@#1}{%
    \ClassWarning{\@classname}{Unknown edition (#1) of Mathematics
      Subject Classification; using '1991\'.}%
  }{%
    \@xp\let\@xp\subjclassname\csname subjclassname@#1\endcsname
  }%
}
\begin{document}

\title[A Primality criterion...]{A primality criterion based on a 
Lucas' congruence}

\author{Romeo Me\v strovi\' c}

\address{Maritime Faculty, University of Montenegro, Dobrota 36,
 85330 Kotor, Montenegro} \email{romeo@ac.me}

{\renewcommand{\thefootnote}{}\footnote{2010 {\it Mathematics Subject 
Classification.} 
 11A51, 11A07,  05A10, 11B65.

{\it Keywords and phrases}: 
primality criterion, congruence modulo a prime (prime power), 
 Lucas' congruence,  Lucas' theorem. }
\setcounter{footnote}{0}}

\maketitle

 \begin{abstract} Let $p$ be a prime.
In 1878  \'{E}.  Lucas proved that the congruence
$$
{p-1\choose k}\equiv (-1)^k\pmod{p}
 $$
holds for any nonnegative integer $k\in\{0,1,\ldots,p-1\}$. 
The converse statement was given in Problem 1494  of 
{\it Mathematics Magazine} proposed  in 1997 by E. Deutsch and I.M. Gessel. 
In this note we generalize this converse assertion  
by the following result: 
If $n>1$ and $q>1$ are integers such that 
  $$
{n-1\choose k}\equiv (-1)^k \pmod{q}
   $$
for every integer $k\in\{0,1,\ldots, n-1\}$,
then $q$ is a prime and $n$ is a power of $q$.
  \end{abstract} 

\section{Introduction and the main result}

As noticed in  \cite{gr}, many great mathematicians of the nineteenth century
considered problems involving binomial coefficients modulo prime or  
 prime power (for instance Babbage, Cauchy, Cayley, Gauss, Hensel,
Hermite, Kummer, Legendre, Lucas, and Stickelberger). They discovered 
a variety of elegant and surprising theorems which are
often easy to prove. For more information on these classical  results, their
extensions,  and new results about this subject, see Dickson \cite{d},
and Granville \cite{gr}. Furthermore, 
the arithmetic and divisibility properties of binomial coefficients can be 
used to establish criteria for primality.
In 1801 Gauss \cite[Disquisitiones Arithmeticae, 1801, art. 329]{ga} wrote:

{\it ``The problem of distinguishing prime numbers from composite numbers ...
is known to be one of the most important and useful in arithmetic. ...
The dignity of the science itself seems to require that every possible means be
explored for solution of a problem so elegant and so celebrated.''}

As far back as 1819 (see \cite{gr}), 
creator of of machines that were precursors of the modern computer
Charles Babbage gave an easily proved 
characterization of the primes as follows:
an integer $n\ge 2$ is a prime if and only if
${n+m\choose n}\equiv 1(\bmod{\,n})$ for all positive integers $m$ 
with $0\le m\le n-1$. This criterion was recently generalized by the 
author of this note in \cite[Theorem 1.1]{me3}.  
In 1954 the amateur mathematician Pedro A. Piza \cite{pi} proved that 
an odd positive integer $2n+1\ge 3$ is a prime if and only if 
${2n-k\choose k-1}\equiv 0(\bmod{\,k})$
for all $k\in\{1,2,\ldots,n\}$. In 1972 H.B. Mann and D. Shanks
\cite{ms} discovered another attractive primality criterion 
which may be stated as follows: a positive integer $k\ge 2$ is prime if and 
only if ${n\choose k-2n}\equiv 0(\bmod{\,k})$
for each $n\ge 1$ such that $k/3\le n\le k/2$. The following ``dual''
criterion to that of Mann and Shanks was discovered in 1985  by 
H.W. Gould and W.E. Greig \cite{gg} in the following form: 
a positive integer $k\ge 2$ is a prime if and only if 
${-n\choose k-2n}\equiv 0(\bmod{\,k})$ for each $n\ge 1$ such that $n\le k/2$.
Notice that by the famous Lucas' theorem \cite{l} 
given by the congruence \eqref{con4}, we immediately have
${np \choose mp}\equiv {n \choose m}(\bmod{\,p})$,
where $p$ is a prime,  $n$ and $m$ are integers with $0\le m\le n$. 
In 2009 the author of this note \cite[Theorem]{me1} proved a partial converse 
theorem of this assertion as follows:
If $d,q>1$ are integers such that 
  ${nd\choose md}\equiv {n\choose m} (\bmod{\,q})$
for every pair of integers $n\ge m\ge 0$,
then $d$ and $q$ are powers of the same prime $p$.
 
Let $p$ be a prime. In 1878 \'{E}. Lucas \cite{l} proved that 
  \begin{equation}\label{con1}
{p-1\choose k}\equiv (-1)^k\pmod{p}
  \end{equation}
for any nonnegative integer $k\in\{0,1,\ldots,p-1\}$. 
By Problem 1494  of {\it Mathematics Magazine} proposed by E. Deutsch and 
I.M. Gessel in 1997 \cite{dg} (see also \cite[pp. 277--278]{cg}), 
a converse assertion is also true; that is, 
an integer $p\ge 2$ is a prime if and only if the congruence 
\eqref{con1} holds for each   $k\in\{0,1,\ldots,p-1\}$. 
T.-X. Cai and A. Granville \cite[p. 277]{cg} proved that in this assertion the 
range for $k$ may be shortened to $0\le k\le \sqrt{n}$.
Accordingly, the simultaneous congruences \eqref{con1} with 
$k\in\{0,1,\ldots,p-1\}$ could be used to identify primes.

Our Theorem \ref{th} generalizes the previously mentioned 
criterion for primality. This is motivated by the following 
result.
   \begin{proposition}\label{pr} 
Let $p$ be  a prime and let  $f$ be a positive integer. Then for  each 
$k\in\{0,1,\ldots, p^f-1\}$ we have
 \begin{equation}\label{con2}
{p^f-1\choose k}\equiv (-1)^k\pmod{p}.
  \end{equation}
  \end{proposition}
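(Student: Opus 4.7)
The plan is to reduce the statement to the base case \eqref{con1} by means of Lucas' classical theorem \eqref{con4}, which the text has just recalled.

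First I would write both $p^f-1$ and $k$ in base $p$. Since $p^f-1=\sum_{i=0}^{f-1}(p-1)p^i$, its base-$p$ digits are all equal to $p-1$. For the given $k\in\{0,1,\ldots,p^f-1\}$, I would write $k=\sum_{i=0}^{f-1}k_ip^i$ with $0\le k_i\le p-1$. Applying Lucas' theorem \eqref{con4} digit-by-digit yields
$$
\binom{p^f-1}{k}\equiv\prod_{i=0}^{f-1}\binom{p-1}{k_i}\pmod{p}.
$$

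Next I would apply the base Lucas congruence \eqref{con1} to each factor on the right, giving $\binom{p-1}{k_i}\equiv(-1)^{k_i}\pmod p$, and hence
$$
\binom{p^f-1}{k}\equiv(-1)^{k_0+k_1+\cdots+k_{f-1}}\pmod{p}.
$$
It then remains to identify this sign with $(-1)^k$ modulo $p$.

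The only minor subtlety, and in a sense the main obstacle, is the parity bookkeeping, which I would handle by splitting on $p$. If $p$ is odd, then every power $p^i$ is odd, so $k=\sum k_ip^i\equiv\sum k_i\pmod{2}$, and consequently $(-1)^{\sum k_i}=(-1)^k$ as integers, which gives \eqref{con2}. If $p=2$, the congruence \eqref{con2} reduces to the assertion that $\binom{2^f-1}{k}$ is odd for every admissible $k$, and this is immediate either from the product formula above (each $\binom{1}{k_i}$ being $1$ since $k_i\in\{0,1\}$) or from the observation that $-1\equiv 1\pmod 2$. In either case, \eqref{con2} follows for all $k\in\{0,1,\ldots,p^f-1\}$.
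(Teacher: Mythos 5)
Your proof is correct and follows essentially the same route as the paper: write $k$ and $p^f-1=\sum_{i=0}^{f-1}(p-1)p^i$ in base $p$, apply Lucas' theorem \eqref{con4} together with the base congruence \eqref{con1} to get $(-1)^{\sum k_i}$, and finish with the same parity argument (for odd $p$, $k\equiv\sum k_i\pmod 2$; for $p=2$, $-1\equiv 1\pmod 2$). No discrepancies to report.
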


 We are now ready to state the main result.
 \begin{theorem}\label{th}
Let $n>1$ and $q>1$ be integers such that 
  \begin{equation}\label{con3}
{n-1\choose k}\equiv (-1)^k \pmod{q}
   \end{equation}
for every integer $k\in\{0,1,\ldots, n-1\}$.
Then $q$ is a prime and $n$ is a power of this prime $q$.
   \end{theorem}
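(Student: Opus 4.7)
The plan is to split into cases on the parity of $n$, after first extracting two easy consequences of \eqref{con3}. Taking $k=1$ gives $n-1\equiv -1\pmod{q}$, so $q\mid n$. Taking $k=n-1$ gives $1\equiv (-1)^{n-1}\pmod{q}$, which forces $q=2$ when $n$ is even and is vacuous when $n$ is odd.

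If $n$ is even, then $q=2$ and the hypothesis reduces to the assertion that $\binom{n-1}{k}$ is odd for every $0\le k\le n-1$. By Kummer's theorem (equivalently Lucas), this forces every base-$2$ digit of $n-1$ to equal $1$, i.e.\ $n=2^{m}$ for some $m\ge 1$, finishing this case.

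If $n$ is odd, the key step is Pascal's rule:
$$\binom{n}{k}=\binom{n-1}{k}+\binom{n-1}{k-1}\equiv(-1)^{k}+(-1)^{k-1}=0\pmod{q},\qquad 1\le k\le n-1.$$
Thus $q$ divides every ``middle'' binomial $\binom{n}{k}$. For any prime $p\mid q$ one then shows that $n$ is a power of $p$: writing $n=p^{a}m$ with $\gcd(m,p)=1$, and assuming $m\ge 2$, Kummer's theorem applied to $\binom{n}{p^{a}}$ shows that the base-$p$ addition $p^{a}+p^{a}(m-1)=n$ involves no carry (the transition $m-1\mapsto m$ produces none, since $p\nmid m$), so $p\nmid\binom{n}{p^{a}}$, contradicting $p\mid\binom{n}{p^{a}}$. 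Hence $m=1$, and consequently $q$ has a single prime divisor $p$ with $n=p^{f}$ for some $f\ge 1$.

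Finally, to upgrade $q=p^{j}$ to $j=1$: if $j\ge 2$ then $f\ge 2$ (because $q\mid n$), but Kummer applied to the addition $p^{f-1}+(p-1)p^{f-1}=p^{f}$ records exactly one carry, giving $v_{p}\binom{p^{f}}{p^{f-1}}=1$, which contradicts $p^{j}\mid\binom{n}{p^{f-1}}$. Therefore $q=p$. The main obstacle is recognizing the Pascal's rule trick that converts the hypothesis on $\binom{n-1}{k}$ into a clean divisibility statement for $\binom{n}{k}$; after that the proof runs on routine $p$-adic valuation calculations for binomial coefficients.
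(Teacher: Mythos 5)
Your proof is correct, and it takes a genuinely different route from the paper's. The paper works directly with the hypothesis \eqref{con3}: it writes $n=sp^f$ for a prime $p\mid q$, rules out $s\ge 2$ by a Lucas-theorem computation of ${n-1\choose p^f}\bmod p$, and rules out $q=p^e$ with $e\ge 2$ via a separate Lemma that evaluates ${p^f-1\choose p^{f-1}}$ modulo $p^2$ through an explicit product manipulation and a Wolstenholme-style harmonic-sum identity. Your Pascal's-rule observation, ${n\choose k}={n-1\choose k}+{n-1\choose k-1}\equiv 0\pmod q$ for $1\le k\le n-1$, converts the alternating-sign hypothesis into the classical ``$q$ divides all middle binomial coefficients'' condition, after which two applications of Kummer's theorem (no carries in $p^a+p^a(m-1)$ when $p\nmid m$; exactly one carry in $p^{f-1}+(p-1)p^{f-1}$) replace both the Lucas computation and the entire Lemma. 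This is a real simplification: you only ever need the $p$-adic valuation of ${p^f\choose p^{f-1}}$, which is $1$ by inspection, rather than the residue of ${p^f-1\choose p^{f-1}}$ modulo $p^2$. One small remark: your even/odd case split is unnecessary, since the Pascal argument and the subsequent Kummer analysis work verbatim for $n$ even (with $p=2$); the separate treatment of $q=2$ via Lucas is correct but redundant.
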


\section{Proofs of Proposition \ref{pr} and Theorem \ref{th}}

 \begin{proof}[Proof of Proposition \ref{pr}]
If $a=a_0+a_1p+\cdots +a_lp^l$ and
$b=b_0+b_1p+\cdots +b_lp^l$ are the $p$-adic expansions of 
nonnegative integers $a$ and $b$
(so that $0\le a_i,b_i\le p-1$ for all $i=0,1,\ldots,l$),
then by Lucas's theorem (\cite{l}; also see \cite{gr} or \cite{me2}),
 \begin{equation}\label{con4}
{a\choose b}\equiv \prod_{i=0}^{l}{a_i\choose b_i}\pmod{p}.
 \end{equation}  
If we take $k=\sum_{i=0}^{f-1}k_ip^i$ with $0\le k_i\le p-1$ for 
all $i=0,1,\ldots,f-1$, then in view of the fact  
$p^f-1=\sum_{i=0}^{f-1}(p-1)p^i$,  the  congruences \eqref{con4} and 
\eqref{con1} immediately yield
  \begin{equation}\label{con5}\begin{split}
{p^f-1\choose k}&={\sum_{i=0}^{f-1}(p-1)p^i\choose\sum_{i=0}^{f-1}k_ip^i}
\equiv\prod_{i=0}^{f-1}{p-1\choose k_i}\pmod{p}\\
&\equiv\prod_{i=0}^{f-1}(-1)^{k_i}=(-1)^{\sum_{i=0}^{f-1}k_i}\equiv
(-1)^k\pmod{p}.
 \end{split}\end{equation}

Notice that in the last congruence of \eqref{con5} we have used the fact
that if $p$ is an odd prime, then $k$  and the sum $\sum_{i=0}^{f-1}k_i$
have the same parity, while for $p=2$ holds  $1\equiv -1(\bmod{\, 2})$.
  \end{proof}

Proof of Theorem \ref{th} is based on Proposition \ref{pr} and the following 
lemma. 
  \begin{lemma}\label{le}
Let $p$ be a prime and let $f$ be a positive integer greater than
$1$. Then
      \begin{equation}\label{con6}
{p^f-1\choose p^{f-1}}\equiv\left\{
   \begin{array}{ll}
p-1 &\pmod{p^2}\quad {\rm if}\,\, p\ge 3\\
3 &\pmod{4}\quad {\rm if}\,\, p= 2.
    \end{array}\right.
     \end{equation}
  \end{lemma}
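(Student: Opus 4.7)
The plan is to isolate the factors of $p$ in the product
$$\binom{p^f-1}{p^{f-1}}=\prod_{j=1}^{p^{f-1}}\frac{p^f-j}{j}.$$
For indices $j=pi$ with $1\le i\le p^{f-2}$, the factor simplifies to $\frac{p^{f-1}-i}{i}$, and the product over such $i$ equals $\binom{p^{f-1}-1}{p^{f-2}}$. Writing $A_g:=\binom{p^g-1}{p^{g-1}}$ and
$$B_g:=\prod_{\substack{1\le j\le p^{g-1}\\ \gcd(j,p)=1}}\frac{p^g-j}{j},$$
this yields the recurrence $A_f=B_f\cdot A_{f-1}$ for $f\ge 2$. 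Iterating down to the base case $A_1=p-1$ gives $A_f=(p-1)\prod_{g=2}^{f}B_g$.

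The core step is to evaluate $B_g$ modulo $p^2$ (or modulo $4$ when $p=2$). Since $j$ is coprime to $p$, each factor $\frac{p^g-j}{j}=-1+p^g/j$ is a $p$-adic unit, and a direct expansion of the product gives
$$B_g\equiv(-1)^{N}\Bigl(1-p^g\!\!\sum_{\substack{1\le j\le p^{g-1}\\ \gcd(j,p)=1}}\!\!\frac{1}{j}\Bigr)\pmod{p^{2g}},$$
where $N=p^{g-2}(p-1)$ is the number of factors. For odd $p$, $p-1$ is even so $N$ is even, and $p^g$ is divisible by $p^2$ since $g\ge 2$; hence $B_g\equiv 1\pmod{p^2}$, and therefore $A_f\equiv(p-1)\pmod{p^2}$. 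For $p=2$ we compute $B_2=3$ directly (the exceptional case $N=1$); for $g\ge 3$, $N=2^{g-2}$ is even and $2^g$ is divisible by $8$, so $B_g\equiv 1\pmod 4$, and we conclude $A_f\equiv 3\cdot 1\cdots 1\equiv 3\pmod 4$ for all $f\ge 2$.

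The main obstacle is careful $p$-adic bookkeeping: interpreting the reciprocal sums $\sum 1/j$ as $p$-adic integers, controlling the tail of the multinomial expansion of $B_g$ (whose contributions lie in $p^{2g}\mathbb{Z}_p$ and are harmless for $g\ge 2$), and correctly tracking the sign $(-1)^N$. The sign is trivial for odd $p$, but for $p=2$ the value $N=1$ at $g=2$ is precisely what produces the distinguishing factor of $3$ in the final congruence.
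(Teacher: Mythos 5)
Your argument is correct, and while it shares the paper's overall strategy---a recursive descent obtained by splitting the product formula for the binomial coefficient according to whether the index is divisible by $p$---your decomposition is genuinely different and buys a real simplification. The paper first factors out $p-1$ via ${p^f-1\choose p^{f-1}}=(p-1){p^f-1\choose p^{f-1}-1}$ and then iterates on ${p^g-1\choose p^{g-1}-1}=\prod_{i=1}^{p^{g-1}-1}\frac{p^g-i}{p^{g-1}-i}$; there each factor with $p\nmid i$ equals $1+\frac{p^{g-1}(p-1)}{p^{g-1}-i}$, which is $1\pmod{p^2}$ only when $g\ge 3$, so the recursion must stop at $g=2$ and the paper needs the separate Wolstenholme-type evaluation ${p^2-1\choose p-1}\equiv 1\pmod{p^2}$ (its congruence \eqref{con14}, the most delicate step of the proof). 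Your representation $A_g=\prod_{j=1}^{p^{g-1}}\frac{p^g-j}{j}$ instead makes each coprime factor equal to $-1+p^g/j$, already $\equiv -1\pmod{p^2}$ for every $g\ge 2$, so the recursion runs all the way down to the trivial base case $A_1=p-1$ and the harmonic-sum computation disappears entirely; the price is tracking the sign $(-1)^N$ with $N=p^{g-2}(p-1)$, whose single odd occurrence at $p=2$, $g=2$ is exactly what produces the residue $3\pmod 4$. Both proofs are complete; yours is shorter and makes the $p=2$ anomaly more transparent, while the paper's isolates the classical congruence ${p^2-1\choose p-1}\equiv 1\pmod{p^2}$ as a reusable ingredient.
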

   \begin{proof}
By using the identities ${a-1\choose b}=\frac{a-b}{a}{a\choose b}$
and  ${a\choose b}=\frac{a}{b}{a-1\choose b-1}$ with 
$1\le b\le a$, we have
  \begin{equation}\label{con7}\begin{split}
{p^f-1\choose p^{f-1}}&=\frac{p^f-p^{f-1}}{p^f}{p^f\choose p^{f-1}}=
\frac{p^f-p^{f-1}}{p^f}\cdot 
\frac{p^f}{p^{f-1}}{p^f-1\choose p^{f-1}-1}\\
&=(p-1){p^f-1\choose p^{f-1}-1}.
 \end{split}\end{equation}
Further,  we have
      \begin{equation}\label{con8}\begin{split}
{p^f-1\choose p^{f-1}-1}&=\prod_{i=1}^{p^{f-1}-1}\frac{p^f-i}{p^{f-1}-i}=
\prod_{1\le i\le p^{f-1}-1\atop i\not\equiv 0(\bmod{\,p})}\frac{p^f-i}{p^{f-1}-i}
\prod_{1\le i\le p^{f-1}-1\atop i\equiv 0(\bmod{\,p})}\frac{p^f-i}{p^{f-1}-i}. 
        \end{split}\end{equation}
If $f\ge 3$, then  
 \begin{equation}\label{con9}
\frac{p^f-i}{p^{f-1}-i}\equiv  1(\bmod{\,p^2})\quad 
{\rm for\,\, each}\,\, i\,\, {\rm such\,\,that}\,\, 
1\le i\le p^{f-1}-1\,\,{\rm and}\,\, i\not\equiv 0(\bmod{\,p}).
    \end{equation}
Furthermore, for  $f\ge 3$ we have  
  \begin{equation}\label{con10}\begin{split}
\prod_{1\le i\le p^{f-1}-1\atop i\equiv 0(\bmod{\,p})}\frac{p^f-i}{p^{f-1}-i}
&=\prod_{j=1}^{p^{f-2}-1}\frac{p^f-jp}{p^{f-1}-jp}= 
\prod_{j=1}^{p^{f-2}-1}\frac{p^{f-1}-j}{p^{f-2}-j}=
{p^{f-1}-1\choose p^{f-2}-1}.
       \end{split}\end{equation}
Substituting \eqref{con9} and \eqref{con10}  into 
\eqref{con8} we find that for each prime $p$ and every integer $f\ge 3$
holds 
 \begin{equation}\label{con11}
{p^f-1\choose p^{f-1}-1}\equiv {p^{f-1}-1\choose p^{f-2}-1}\pmod{p^2}.
    \end{equation}
Iterating the congruence \eqref{con11} $f-2$ times yields 
   \begin{equation}\label{con12}
{p^f-1\choose p^{f-1}-1}\equiv {p^2-1\choose p-1}\pmod{p^2},
    \end{equation}
which substituting into \eqref{con7} for every $f\ge 3$ gives 
 \begin{equation}\label{con13}
{p^f-1\choose p^{f-1}}\equiv (p-1){p^2-1\choose p-1}\pmod{p^2}.
    \end{equation}
Further, for each prime $p\ge 3$ we have
    \begin{equation}\label{con14}\begin{split}
{p^2-1\choose p-1}&=\prod_{i=1}^{p-1}\frac{p^2-i}{p-i}\equiv 
\prod_{i=1}^{p-1}\frac{-i}{p-i}\pmod{p^2}\\
&\equiv\prod_{i=1}^{p-1}\frac{-i(p+i)}{-i^2}=
\prod_{i=1}^{p-1}\left(\frac{p}{i}+1\right)\equiv 1+p\sum_{i=1}^{p-1}
\frac{1}{i}\pmod{p^2}\\
&=1+p\sum_{i=1}^{(p-1)/2}\left(\frac{1}{i}+\frac{1}{p-i}\right)=
1+p^2\sum_{i=1}^{(p-1)/2}\frac{1}{i(p-i)}\equiv 1\pmod{p^2}.
    \end{split}\end{equation}
Substituting \eqref{con14} into  \eqref{con13} we obtain 
that for each $f\ge 3$ and any prime $p\ge 3$
 \begin{equation}\label{con15}
{p^f-1\choose p^{f-1}}\equiv p-1\pmod{p^2}.
    \end{equation}
Notice also that for a prime  $p\ge 3$ the identity 
\eqref{con7} with $f=2$ and the congruence \eqref{con14} yield
    \begin{equation}\label{con16}
{p^2-1\choose p}=(p-1){p^2-1\choose p-1}\equiv p-1\pmod{p^2}.
    \end{equation}
The congruences  \eqref{con15} and  \eqref{con16}
imply the first part of the congruence \eqref{con6}.

It remains to consider the case when $p=2$. By \eqref{con12}, 
for each $f\ge 3$ we have 
     \begin{equation}\label{con17}
{2^f-1\choose 2^{f-1}}\equiv 3\pmod{4},
    \end{equation}
which is also satisfied for $f=2$. The congruence \eqref{con17}
is in fact the second part of the congruence \eqref{con6}, and the 
proof is completed.
 \end{proof}

\begin{proof}[Proof of Theorem \ref{th}]
Taking $k=1$ into the congruence \eqref{con3} we obtain 
  \begin{equation}\label{con18}
n\equiv 0(\bmod{\,q}).
   \end{equation}
Therefore, if $p$ is a prime divisor of $q$, then 
$n$ can be expressed as $n=sp^f$, where  $f$ and $s$ 
are positive  integers such that $s$ is not divisible by $p$. 
Now we consider the following  three cases.

{\it Case} 1: $s=f=1$. Then $n=p$, and this together 
with the congruence \eqref{con18} yields $q=p$.

{\it Case} 2: $s=1$ and $f\ge 2$. Then $n=p^f$, and 
hence, by the congruence \eqref{con18} it follows that 
$q=p^e$ with  $1\le e\le f$. By the congruence \eqref{con6} of 
Lemma \ref{le} we have
    \begin{equation}\label{con19}
{n-1\choose p^{f-1}}={p^f-1\choose p^{f-1}}\equiv\left\{
   \begin{array}{ll}
p-1 &\pmod{p^2}\quad {\rm if}\,\, p\ge 3\\
3 &\pmod{4}\quad {\rm if}\,\, p= 2.
    \end{array}\right.
     \end{equation}
On the other hand, if we suppose that $e\ge 2$, then  the congruence 
\eqref{con3} with $k=p^{f-1}$ reduced modulo $p^2$ yields  
      \begin{equation}\label{con20}
{n-1\choose p^{f-1}}={p^f-1\choose p^{f-1}}\equiv\left\{
   \begin{array}{ll}
-1 &\pmod{p^2}\quad {\rm if}\,\, p\ge 3\\
1 &\pmod{4}\quad {\rm if}\,\, p= 2.
    \end{array}\right.
     \end{equation}
Comparing the congruences \eqref{con19}  and \eqref{con20}, we get 
$p\equiv 0(\bmod{\, p^2})$. This contradiction 
shows  that must be $e=1$, or equivalently, $q=p$.

 {\it Case} 3: $s\ge 2$. Then take $s=\sum_{i=0}^ts_ip^i$ with 
$0\le s_i\le p-1$ for all $i=1,\ldots,t$ and $1\le s_0\le p-1$.
Applying  Lucas' theorem (the congruence \eqref{con4}), we have 
    \begin{equation}\label{con21}\begin{split}
{n-1\choose p^f}&={sp^f-1\choose p^f}
={\sum_{i=1}^ts_ip^{i+f}+(s_0-1)p^f+\sum_{i=0}^{f-1}(p-1)p^i
 \choose p^f}\\
&\equiv {s_0-1\choose 1}=s_0-1\equiv s-1\pmod{p}.
      \end{split}\end{equation}   
The congruence \eqref{con21} and the condition \eqref{con3} with
$k=p^f$ imply  that  $s-1\equiv (-1)^{p^f}(\bmod{\,p})$. This shows that 
 must be $s\equiv 0(\bmod{\,p})$. A contradiction, and thus 
this case is impossible.

Finally, the all three considered cases clearly  completes the proof of 
the theorem.
 \end{proof}

\end{document}